\def\ie{\emph{i.e., }}
\def\R{\mathbb R}
\def\Z{\mathbb Z}
\def\T{\mathbb T}
\newtheorem*{theorem}{Theorem}
\newtheorem{prop}{Proposition}[section]
\newtheorem*{add}{Addendum to Proposition~\ref{prop22}}
\newtheorem{cor}[prop]{Corollary}
\newtheorem{lemma}[prop]{Lemma}
 \theoremstyle{remark}
\newtheorem{remark}[prop]{Remark}
\theoremstyle{remark}
\begin{document}
\author{ F. Thomas Farrell and Andrey Gogolev$^\ast$}
\title[Expanding endomorphisms]{Examples of expanding endomorphisms on fake tori}
\thanks{$^\ast$Both authors were partially supported by NSF grants. The second named author also would like to acknowledge the support provided by Dean's Research Semester Award.}
\begin{abstract}
We construct expanding endomorphisms on smooth manifolds that are
homeomorphic to tori yet have exotic underlying PL-structures.
\end{abstract}
\date{}
 \maketitle

\section{Introduction}
Let $M$ be a closed smooth manifold.
Recall that a smooth map $f\colon M\to M$ is called an {\it expanding endomorphism} if $M$ admits a Riemannian metric $\|\cdot\|$ such that 
$\|D_f(v)\| >\|v\|$ for all non-zero tangent vectors $v$.

Shub \cite{Sh} proved that an expanding endomorphism of a closed manifold $M$ is topologically conjugate to an affine expanding endomorphism of an infranilmanifold if and only if the fundamental group $\pi_1(M)$ contains a nilpotent subgroup of finite index. Franks~\cite{Fr} showed that if $M$ admits an expanding endomorphism then $\pi_1(M)$ has polynomial growth. Finally, in 1981, Gromov \cite{Gr} completed classification by showing that any finitely generated group of polynomial growth contains a nilpotent
subgroup of finite index. Hence any expanding endomorphism is topologically conjugate to an affine expanding endomorphism of an infranilmanifold. In particular, any manifold that supports an expanding endomorphism is homeomorphic to an infranilmanifold.

Farrell and Jones \cite{FJ} showed that any connected sum $\T^d\#\Sigma^d$ of the standard $d$-dimensional torus $\T^d=\R^d/\Z^d$ and a homotopy sphere $\Sigma^d$ admits an expanding endomorphism. When $\Sigma^d$ $(d\geq7)$ is not diffeomorphic to the standard sphere $\mathbb S^d$, this constraction provides an example of a manifold that admits an expanding endomorphism and is not diffeomorphic to any infranilmanifold. However, $\T^d\#\Sigma^d$ is always PL homeomorphic to the standard torus $\T^d$ via the Alexander trick.

A $d$-dimensional smooth manifold $M$ is called a {\it fake torus} if $M$ is homeomorphic to $\T^d$ but not PL homeomorphic to $\T^d$. In this paper we obtain the first examples of fake tori that admit expanding endomorphisms.

\begin{theorem}
For any $d\geq 7$ there exists a $d$-dimensional fake torus $M$ that admits an expanding endomorphism. 
\end{theorem}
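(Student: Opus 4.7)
My plan is to adapt the approach of Farrell and Jones for $\T^d \# \Sigma^d$ to genuine fake tori, by selecting a standard linear expanding self-cover of $\T^d$ whose pullback action is trivial on the entire smoothing classification of $\T^d$.

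Smoothing theory parameterizes the smooth structures on $\T^d$ (for $d \geq 5$) by the PL obstruction in $H^3(\T^d; \Z/2)$ (Hsiang-Shaneson-Wall) together with smoothing obstructions in $\bigoplus_n H^n(\T^d; \Theta_n)$ (Kirby-Siebenmann), where $\Theta_n$ is the group of homotopy $n$-spheres. The degree-$k^d$ linear self-cover $E_k(x) = kx$ of $\T^d$ acts on each degree-$n$ summand $H^n(\T^d; A_n)$ by multiplication by $k^n$, since $E_k^*$ is multiplication by $k$ on $H^1$ and $H^*(\T^d)$ is generated by $H^1$ as a ring. Choosing $k \equiv 1 \pmod N$ with $N = \mathrm{lcm}(2, |\Theta_7|, \ldots, |\Theta_d|)$ and $k \geq 2$ --- for example $k = 29$ when $d = 7$, since $29^3 \equiv 1 \pmod 2$ and $29^7 \equiv 1 \pmod{28}$ --- makes $E_k^*$ the identity on the entire smoothing classifying set of $\T^d$.

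Pick a fake smooth structure $\mathcal{S}$ on $\T^d$, i.e., one whose PL component in $H^3(\T^d; \Z/2)$ is nonzero, and set $M := (\T^d, \mathcal{S})$. Since $E_k^* \mathcal{S}$ and $\mathcal{S}$ agree at the classifying level they are concordant; combined with the vanishing of $\mathrm{Wh}(\Z^d)$ and the $s$-cobordism theorem, this concordance yields a diffeomorphism $\phi \colon (\T^d, \mathcal{S}) \to (\T^d, E_k^* \mathcal{S})$. Then $f := E_k \circ \phi$ is a smooth degree-$k^d$ self-covering of $M$. Provided $\phi$ is chosen sufficiently $C^1$-close to the identity, $f$ is a $C^1$-perturbation of the linear expanding map $E_k$ and hence expanding by the structural stability of expanding endomorphisms (Shub, Franks).

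The main obstacle is to ensure that the diffeomorphism $\phi$ can be chosen $C^1$-close to the identity: a concordance of smooth structures supplies $\phi$ only up to pseudoisotopy, without $C^1$ control in general. I expect this to be handled by a surgery-theoretic refinement in the spirit of Farrell and Jones's construction on $\T^d \# \Sigma^d$, performing the necessary smoothing adjustments in a localized controlled way near the submanifolds that support the smoothing data, so that $\phi$ can be taken to be the identity away from a thin region and hence $C^1$-small globally. The dimensional hypothesis $d \geq 7$ enters both through the first nontrivial homotopy-sphere group $\Theta_7 = \Z/28$ appearing in the smoothing analysis and through the range in which the underlying surgery and $s$-cobordism machinery apply cleanly.
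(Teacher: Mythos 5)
Your outline starts from the right observations—there is a fake PL torus detected by a class in $H^3(\T^d;\Z/2)$, and $E_k^*$ acts on $H^n(\T^d;\pi_n(Top/O))$ by $k^n$, so that for suitable $k$ the concordance class of $\mathcal{S}$ is $E_k^*$-invariant (in fact $k$ odd already fixes the PL obstruction, which is all you need; $\Theta_7=\Z/28$ is not where the fake-ness comes from). The gap is exactly where you flag it, and it is not merely technical.

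The statement ``$\phi$ is $C^1$-close to the identity'' is not well-posed in your setting. You have two smooth atlases $\mathcal{S}$ and $E_k^*\mathcal{S}$ on the same underlying topological torus, and $\phi$ is a diffeomorphism $(\T^d,\mathcal{S})\to(\T^d,E_k^*\mathcal{S})$; ``the identity'' is not a smooth map between these structures unless $\mathcal{S}=E_k^*\mathcal{S}$ on the nose, and there is no reason an $E_k$-fixed atlas (as opposed to an $E_k$-fixed concordance class) exists. Nor can one measure $C^1$-size of $\phi$ in the standard charts, since $\phi$ is only smooth in the exotic charts. More substantively, the Farrell–Jones localization you invoke works for $\T^d\#\Sigma^d$ because the exotic data is concentrated in a ball that one can place at a fixed point of $E_k$ and cone off; here the obstruction lives in $H^3(\T^d;\Z/2)$, i.e., it is detected on every parallel $3$-torus and is globally distributed, so there is no small region to push it into. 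This is precisely why the $\T^d\#\Sigma^d$ strategy does not transfer to genuine fake tori, and why a perturbation argument off $E_k$ is unavailable.

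The paper circumvents this entirely by building the fake torus as the mapping torus $M_h$ of a carefully chosen diffeomorphism $h\colon\T^{d-1}\to\T^{d-1}$ (Propositions~\ref{prop21} and~\ref{prop22}), and then constructing the expanding endomorphism $f=q_m\circ p_k$ directly, not as a perturbation of any linear model: $p_k$ expands along the torus fibers using the fact that $h$ is diffeotopic to its lift through a degree-$3^{d-1}$ self-cover (property 3 of Proposition~\ref{prop21}), and $q_m$ expands transversely using $h^2$ diffeotopic to the identity (property 4). Expansion is then verified by an explicit cone estimate in a Finsler metric adapted to the fiber/transverse splitting, with no structural-stability input. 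In short, where your proposal needs $C^1$-control it does not have, the paper designs the self-cover so that uniform expansion is built in by construction.
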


\begin{remark}
Lee and Raymond~\cite{LR} showed that any isomorphism between the fundamental groups of a pair of infranilmanifolds is induced by a diffeomorphism. It follows that a fake torus is not PL homeomorphic to any infranilmanifold.
\end{remark}

\section{Fake tori}
Let $n\geq6$ and let $\T^n=\R^n/\Z^n$ be the $n$-torus.
Consider the standard $3^n$-sheeted self-covering map $\pi\colon \T^n\to\T^n$ given by 
$$
\pi(x)=3x\;\;mod\;\;\Z^n.
$$

If $h\colon \T^n\to \T^n$ is a diffeomorphism which induces the identity homomorphism on $\pi_1(\T^n)$ then $h$ lifts through $\pi$; \ie there exists a diffeomorphism $\tilde{h} \colon\T^n\to\T^n$ that makes the following diagram commute
$$
\begin{CD}
\T^n @>\tilde{h}>> \T^n \\
@V{\pi}VV @V{\pi}VV \\
\T^n @>{h}>> \T^n 
\end{CD}
$$
Note that there are exactly $3^n$ such liftings and they are all diffeotopic.
We say that two diffeomorphisms are {\it diffeotopic} if they are isotopic through a smooth path of diffeomorphisms.

\begin{prop} \label{prop21}
For any $n\geq 6$ there exists a diffeomorphism $h\colon \T^n\to \T^n$ such that 
\begin{enumerate}
\item $h$ is topologically pseudo-isotopic to  $id_{\T^n}$;
\item $h$ is not PL pseudo-isotopic to $id_{\T^n}$;
\item $\tilde{h}$ is diffeotopic to $h$;
\item $h^2=h\circ h$ is diffeotopic to  $id_{\T^n}$.
\end{enumerate}
\end{prop}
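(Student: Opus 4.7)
My strategy is to reduce the four properties of $h$ to the existence of a $\Z/2$-valued characteristic invariant that detects the topological--PL pseudo-isotopy gap, and to exploit the fact that the $3$-fold covering $\pi$ acts trivially on mod-$2$ cohomology of $\T^n$.

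More precisely, consider the abelian group $\mathcal{D}$ of diffeotopy classes of self-diffeomorphisms of $\T^n$ acting trivially on $\pi_1(\T^n)$, together with its subgroups $\mathcal{D}^{PL}\subseteq \mathcal{D}^{TOP}\subseteq \mathcal{D}$ of classes respectively PL and topologically pseudo-isotopic to $id_{\T^n}$. Combining the Hatcher--Wagoner classification of smooth and PL pseudo-isotopies with the Kirby--Siebenmann $\mathrm{TOP}/\mathrm{PL}$ theory and the Hsiang--Shaneson--Wall computation of the structure set of $\T^n$, for $n\ge 6$ one obtains a nontrivial characteristic class
\[
\kappa\colon \mathcal{D}^{TOP}\longrightarrow H^*(\T^n;\Z/2)
\]
whose kernel is $\mathcal{D}^{PL}$.

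I would then pick any $h\in \mathcal{D}^{TOP}$ realizing a nonzero value of $\kappa$; properties $(1)$ and $(2)$ are immediate from this choice. For $(3)$, naturality of $\kappa$ under covers gives $\kappa(\tilde h)=\pi^*\kappa(h)$, and since $\pi^*$ acts on $H^k(\T^n;\Z/2)$ by multiplication by $3^k\equiv 1\pmod 2$, it equals the identity; hence $\tilde h\cdot h^{-1}\in \mathcal{D}^{PL}$. For $(4)$, the $\Z/2$-valued nature of $\kappa$ yields $\kappa(h^2)=2\kappa(h)=0$, so $h^2\in \mathcal{D}^{PL}$. It remains to upgrade both of the conclusions ``$\tilde h\cdot h^{-1}\in \mathcal{D}^{PL}$'' and ``$h^2\in \mathcal{D}^{PL}$'' to genuine diffeotopies to the identity.

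The principal obstacle lies in this final smoothing step: there are secondary pseudo-isotopy obstructions inside $\mathcal{D}^{PL}$ that must be controlled for $\tilde h$ to be diffeotopic (not merely PL pseudo-isotopic) to $h$, and similarly for $h^2$. These live in groups built from $H^*(\T^n;\pi_*(\mathrm{PL}/\mathrm{O}))$ and are not automatically $\Z/2$-valued or $\pi^*$-invariant. I expect the proof to finesse this by choosing $h$ carefully within a subgroup of $\mathcal{D}^{TOP}$ on which the secondary obstructions vanish, leveraging Shaneson's splitting of the $L$-theory of $\Z[\Z^n]$ and the mod-$2$ arithmetic $3^k\equiv 1$ at each layer of the obstruction tower. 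This is where the heavy machinery of surgery theory is doing the real work.
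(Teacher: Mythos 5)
Your proposal correctly isolates the three arithmetic inputs that drive the argument: a $\Z/2$-valued obstruction detecting the topological--PL pseudo-isotopy gap (coming from $Top/PL\simeq K(\Z_2,3)$), the fact that $3^k\equiv 1\pmod 2$ makes the invariant $\pi^*$-invariant for property $(3)$, and the fact that $2\cdot\Z/2=0$ kills the invariant for property $(4)$. This is the same skeleton as the paper. However, there is a genuine gap at exactly the point you flag, and your sketch of how it might be filled does not match what actually makes it work.

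The gap is the passage from ``$\tilde h\cdot h^{-1}$ and $h^2$ lie in $\mathcal{D}^{PL}$'' (or even in the smooth pseudo-isotopy class of $id$) to \emph{diffeotopy} to $id$. You speculate that one should choose $h$ in a subgroup of $\mathcal D^{TOP}$ on which secondary obstructions in $H^*(\T^n;\pi_*(\mathrm{PL}/\mathrm{O}))$ vanish. The paper instead uses two concrete tricks you have not identified. First, it builds everything one dimension down: it constructs $g\colon\T^{n-1}\to\T^{n-1}$ from a smooth structure on $\T^n$ classified by a map $\varphi\colon\T^n\to Top/O$ that factors as $\T^n\xrightarrow{pr}\T^3\xrightarrow{\rho}\mathbb S^3\xrightarrow{\alpha}Top/O$, and sets $h=g\times id_{S^1}$. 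Because the composite factors through a $3$-complex and $\gamma\colon Top/O\to Top/PL$ is $7$-connected, the mod-$2$ degree computations ($27$ odd, $2$ even) already determine the homotopy class of the map into $Top/O$, not merely into $Top/PL$; so there are no surviving secondary obstructions to worry about, and (via the Addendum to Proposition~\ref{prop22}) one gets that $\tilde g$ is \emph{smoothly} pseudo-isotopic to $g$, and $g^2$ is smoothly pseudo-isotopic to $id_{\T^{n-1}}$. Second, to promote smooth pseudo-isotopy of $g$ to diffeotopy of $h=g\times id_{S^1}$, the paper applies Hatcher's product formula for concordances, using $\chi(S^1)=0$. This ``multiply by $id_{S^1}$ and use Hatcher'' step is the essential mechanism you are missing, and without it the argument does not close: a smooth pseudo-isotopy of a diffeomorphism of $\T^n$ need not give a diffeotopy. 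You also need a concrete realization of your $h$ in the first place, which the paper supplies via the Addendum and the explicit $\varphi$; without it, properties $(1)$ and $(2)$ are not ``immediate'' but depend on identifying smooth/PL pseudo-isotopy classes with smooth structures on $\T^{n+1}$, as in Proposition~\ref{prop22} and its Addendum.
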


Recall that given a diffeomorphism $h\colon \T^n\to \T^n$ the {\it mapping torus} of $h$ is defined as
$$
M_h =[0, 1] \times \T^n/(1,x)\sim(0, h(x)).
$$
\begin{prop} \label{prop22}
A diffeomorphism $h\colon \T^n\to \T^n$, $n\geq 5$, is PL pseudo-isotopic to $id_{\T^n}$ if and only if the mapping torus $M_h$ is PL homeomorphic to $\T^{n+1}$. 
\end{prop}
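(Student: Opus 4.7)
\emph{Forward direction.} Suppose $F\colon I\times\T^n\to I\times\T^n$ is a PL pseudo-isotopy with $F(0,x)=(0,x)$ and $F(1,x)=(1,h(x))$. The pair $(1,x)$ and $(0,h(x))$, identified in $M_h$, is carried by $F$ to $(1,h(x))$ and $(0,h(x))$, which are identified in $\T^{n+1}=I\times\T^n/((1,y)\sim(0,y))$. Hence $F$ respects the two quotients and descends to a PL homeomorphism $M_h\to\T^{n+1}$.

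\emph{Converse.} Suppose $\Phi\colon M_h\to\T^{n+1}$ is a PL homeomorphism. The strategy is to modify $\Phi$ so that it preserves the fibrations over $S^1$ and then cut it open along a fiber to extract the pseudo-isotopy. Let $T\subset M_h$ be a standard fiber. The image $\Phi(T)\subset\T^{n+1}$ is a PL-embedded $\T^n$ representing a primitive class in $H_n(\T^{n+1};\Z)$. Post-composing $\Phi$ with the linear automorphism of $\T^{n+1}$ coming from a suitable element of $GL_{n+1}(\Z)$ puts this class into that of a standard fiber $T'\subset\T^n\times S^1=\T^{n+1}$. The key technical step is then to promote this homological coincidence to a PL ambient isotopy carrying $\Phi(T)$ to $T'$: for $n\geq 5$, any two PL-embedded codimension-one tori in $\T^{n+1}$ sharing the same primitive homology class are PL ambient isotopic, a result accessible via PL engulfing together with the $s$-cobordism theorem applied to the complement, which is naturally an $h$-cobordism from $\T^n$ to itself.

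After this ambient isotopy, and a further rotation of the $S^1$-coordinates, $\Phi$ may be assumed fiber-preserving: $\Phi(t,x)=(t,\phi_t(x))$ for a PL family $\{\phi_t\}$ of self-homeomorphisms of $\T^n$. Compatibility of $\Phi$ with the two gluings forces $\phi_1=\phi_0\circ h$, and then $F(t,x):=(t,\phi_0^{-1}\phi_t(x))$ satisfies $F(0,x)=(0,x)$ and $F(1,x)=(1,h(x))$, yielding the required PL pseudo-isotopy from $id_{\T^n}$ to $h$. The principal obstacle I foresee is the PL isotopy-uniqueness step for codimension-one embedded tori in $\T^{n+1}$, which is precisely where the dimension hypothesis $n\geq 5$ is used.
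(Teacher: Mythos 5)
Your forward direction matches the paper's (trivial) observation, and your overall skeleton for the converse — adjust $\Phi$ so that it matches a fiber with a fiber, then cut open to extract a pseudo-isotopy — is the same shape as the paper's argument. But the step you yourself flag as the ``principal obstacle'' is in fact where essentially the entire content of the proof lives, and you have not supplied a proof of it; the gadget you propose does not obviously deliver it.

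Specifically, you claim that applying the $s$-cobordism theorem to the complement of the embedded torus shows that two codimension-one PL tori in $\T^{n+1}$ with the same primitive homology class are PL \emph{ambient isotopic}. The $s$-cobordism theorem applied to the complement of a single such torus shows that each is ``unknotted'' (the complement is a product $\T^n\times[0,1]$), but unknottedness of each torus separately only produces a PL \emph{self-homeomorphism} of $\T^{n+1}$ carrying one to the other, not an ambient isotopy. Upgrading a PL self-homeomorphism of a torus to a PL isotopy of the identity is precisely the kind of statement that fails in this setting — indeed the whole paper is about PL non-isotopy phenomena on tori. The paper sidesteps this issue: rather than proving an ambient isotopy statement about embedded tori, it applies Farrell's fibering theorem to the trivial cobordism $\T^{n+1}\times[0,1]$, obtaining an interpolating $h$-cobordism $W\subset\T^{n+1}\times[0,1]$ between $F^{-1}(N)$ and the standard fiber, trivializes $W$ by the $s$-cobordism theorem, then cuts $\T^{n+1}\times[0,1]$ open along $W$ and applies the (relative) $s$-cobordism theorem a \emph{second} time to the resulting relative $h$-cobordism $\mathcal W^{n+2}$. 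Only this two-stage argument produces the PL homeomorphism of $\T^{n+1}$ carrying one torus to the other in a controlled way suitable for the subsequent cut-open step. Farrell's fibering theorem (needing $n\ge4$) and the two $s$-cobordism applications (needing $n\ge5$) are the actual ingredients; PL engulfing does not appear and is not an adequate substitute.

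A secondary but genuine error: even after arranging $\Phi(T)=T'$, you cannot ``assume $\Phi$ is fiber-preserving,'' $\Phi(t,x)=(t,\phi_t(x))$; matching a single fiber says nothing about the other fibers. Fortunately this assumption is unnecessary: as in the paper, you should simply cut $M_h$ along $T$ and $\T^{n+1}$ along $T'$, obtain a PL homeomorphism $[0,1]\times\T^n\to[0,1]\times\T^n$ restricting to some $\psi_0$ on the bottom and $\psi_0\circ h$ on the top (by the gluing compatibility), and precompose with $\psi_0^{-1}\times\mathrm{id}$ to get the pseudo-isotopy from $\mathrm{id}$ to $h$. Replace your fiber-preserving reduction with this, and then the real work remaining is to rigorously produce the fiber-matching homeomorphism, which requires the fibering-theorem/$s$-cobordism machinery the paper uses.
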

We immediately obtain the following result.

\begin{cor} \label{cor23}
If $h\colon \T^n\to \T^n$, $n\geq 6$, is a diffeomorphism given by Proposition~\ref{prop21} then the mapping torus $M_h$ is a fake torus.
\end{cor}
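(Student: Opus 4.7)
The plan is to derive the two defining conditions of a fake torus separately from items (1) and (2) of Proposition~\ref{prop21}, using Proposition~\ref{prop22} for the PL direction and a direct construction for the topological one. The harder-looking of the two conditions — that $M_h$ is \emph{not} PL homeomorphic to $\T^{n+1}$ — is actually the easy half of the argument, since property (2) says precisely that $h$ is not PL pseudo-isotopic to $id_{\T^n}$, and Proposition~\ref{prop22} then immediately rules out a PL homeomorphism $M_h\cong \T^{n+1}$. This half is a single line.

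For the remaining half — that $M_h$ is homeomorphic to $\T^{n+1}$ — I would build the homeomorphism by hand out of the topological pseudo-isotopy supplied by property (1). Concretely, let $F\colon \T^n\times[0,1]\to\T^n\times[0,1]$ be a homeomorphism with $F(x,0)=(x,0)$ and $F(x,1)=(h(x),1)$. Post-composing $F$ with the quotient $\T^n\times[0,1]\to \T^n\times S^1 = \T^{n+1}$ descends to a continuous map $M_h\to \T^{n+1}$, because the defining relation $(x,1)\sim(h(x),0)$ of $M_h$ is sent to $(h(x),1)$ and $(h(x),0)$, and these two points are identified in $\T^n\times S^1$. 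Since $F$ is a self-homeomorphism of the cylinder which matches the two endpoint identifications, the descended map is itself a homeomorphism of quotients.

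Combining the two paragraphs, $M_h$ is homeomorphic but not PL homeomorphic to $\T^{n+1}$, i.e., $M_h$ is a fake torus, which is exactly what the corollary asserts. I do not anticipate any serious obstacle: the deep input has already been placed in Proposition~\ref{prop22} and in the existence part of Proposition~\ref{prop21}. The only point that warrants a quick sanity check is that the easy direction of Proposition~\ref{prop22} is being applied in the TOP rather than the PL category; but the passage from a pseudo-isotopy to a homeomorphism of mapping tori is purely formal and works category-by-category, so no extra tool is needed.
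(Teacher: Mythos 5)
Your proposal is correct and matches the argument the paper intends when it writes ``We immediately obtain the following result'': property (2) together with Proposition~\ref{prop22} rules out a PL homeomorphism $M_h\cong\T^{n+1}$, while property (1) together with the easy ``mapping-torus'' direction (applied verbatim in the TOP category, as you note) produces the homeomorphism $M_h\cong\T^{n+1}$. No gaps.
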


\begin{add}
In fact, the assignment
$
h\mapsto M_h
$
gives a bijection between the smooth pseudo-isotopy classes of diffeomorphisms $h$ of $\T^n$ such that $h_\#=id_{\pi_1(\T^n)}$ and the smooth structures $\theta$ on $\T^{n+1}$ such that the inclusion map
$\sigma\colon \T^n\times \{0\}\to(\T^n\times S^1, \theta)$
is a smooth embedding.
\end{add}

\begin{remark}\label{rem24}
By smoothing theory~\cite[page 194]{KS} (also see~\cite{Ru}) these structures naturally correspond to the subgroup of those elements $\varphi\in[\T^{n+1}, Top/O]$ for which $\varphi \circ\sigma\in[\T^n, Top/O]$ is the zero element, \ie $\varphi\circ\sigma$ null-homotopic.
\end{remark}

\begin{proof}[Proof of Proposition~\ref{prop22}]
It is straightforward to see that if $h$ is PL pseudo-isotopic to $id_{\T^n}$ then $M_h$ is PL homeomorphic to $\T^{n+1}$.

To see the other implication, let $F\colon\T^{n+1}\to M_h$ be a PL homeomorphism. Since $M_h$ is homeomorphic to $\T^{n+1}$, the induced map $h_\#$ on $\pi_1(\T^n)$ must be identity. Hence the fundamental groups of $\T^{n+1}=\T^n\times S^1$ (we view $\T^{n+1}$ as a mapping torus of $id_{\T^n}$) and $M_h$ are ``canonically'' identified (up to specifying a cross section to the bundle projection $M_h\to S^1$). By precomposing $F$ with a diffeomorphism of $\T^{n+1}$ we can assume that $F_\#\colon \pi_1(\T^{n+1})\to \pi_1(M_h)$ is the canonical identification.

Recall that $M_h$ can be viewed as the total space of a $\T^n$-bundle over $S^1$. Denote by $N\subset M_h$ a distinguished fiber of this bundle. Then, since $F_\#$ is the canonical identification, we have
\begin{equation}\label{eq21}
i_\#(\pi_1(\T^n\times\{0\}))=F_\#^{-1}(\pi_1(N)),
\end{equation}
where $i\colon \T^n\times\{0\}\to \T^n\times S^1$ is an inclusion of a fiber.

Now consider the trivial cobordism $\T^{n+1}\times[0,1]$. View the top boundary $\T^{n+1}\times\{1\}$ as the trivial bundle $\T^n\times S^1$. Using $F$ we pull back the fibration of $M_h$ to obtain a PL $\T^n$-bundle over $S^1$ on the bottom boundary $\T^{n+1}\times \{0\}$. Then $F^{-1}(N)$ is a distinguished fiber on the bottom. Because of the equation~(\ref{eq21}) and the fact that the Whitehead group $Wh(\pi_n(\T^{n+1}))$ vanishes we can apply the fibering theorem of~\cite{F} (which requires $n\geq 4$) to $\T^{n+1}\times [0, 1]$. This way we obtain a PL fibering of $\T^{n+1}\times [0,1]$ over $S^1$ that extends the fiberings of the top and the bottom boundaries. In particular, we obtain a PL submanifold $W^{n+1}\subset \T^{n+1}\times [0,1]$ such that $\partial^-W^{n+1}=F^{-1}(N)$ and $\partial^+W^{n+1}=(\T^n\times\{0\})\times\{1\}$, where $\T^n\times\{0\}\subset \T^{n+1}$ is a fiber of the trivial fibering of $\T^{n+1}\times\{1\}$ (see Figure~\ref{fig1}).

\begin{figure}[htbp]
\begin{center}
\includegraphics{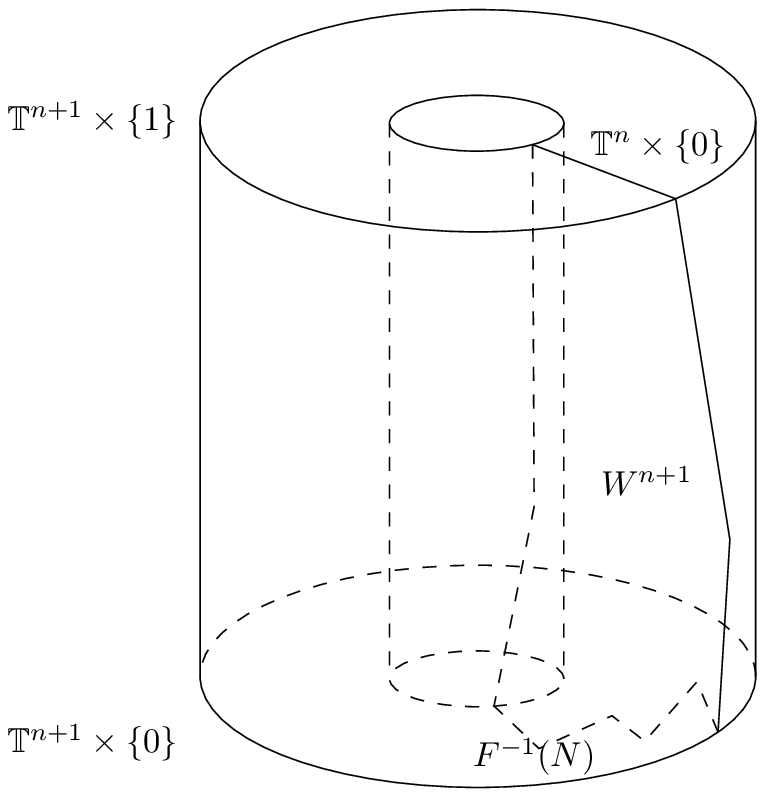}
\end{center}
 \caption{}
\label{fig1}
\end{figure}

It is easy to check that $W^{n+1}$ is an $h$-cobordism. Since $Wh(\pi_1(\T^n))$ vanishes the $s$-cobordism theorem (which requires $n\geq 5$) applies and we conclude that $W^{n+1}$ is PL homeomorphic to the product $\T^n\times [0,1]$. Cutting $\T^{n+1}\times[0,1]$ open along $W^{n+1}$ yields a new relative $h$-cobordism $\mathcal W^{n+2}$ between $\T^{n+1}$ cut along $F^{-1}(N)$ and $\T^{n+1}$ cut along $\T^n\times\{0\}$. Again, since $Wh(\pi_n(\T^n))=0$, this $h$-cobordism is a PL product extending the product structure on $W^{n+1}$. Therefore we obtain a homeomorphism $G\colon \T^{n+1}\times[0,1]\to \T^{n+1}\times[0,1]$ such that 
$$
G|_{\T^{n+1}\times\{1\}}=id_{\T^n} \;\;\mbox{and} \;\;G((\T^n\times\{0\})\times\{0\})=F^{-1}(N).
$$
Let $F_1=F\circ G|_{\T^{n+1}\times\{0\}}$. Then we have $F_1(\T^n\times\{0\})=N\subseteq M_h$.
Cutting $\T^{n+1}$ and $M_h$ open along $\T^n\times\{0\}$ and $N$ respectively yields a PL pseudo-isotopy between $h\circ \psi$ and $\psi$, where $\psi=F_1|_{\T^n\times\{0\}}$. Precomposing this pseudo-isotopy with 
$$\psi^{-1}\times id\colon \T^n\times[0,1]\to\T^n\times[0,1]$$
 yields the desired PL pseudo-isotopy between $h$ and $id_{\T^n}$.
\end{proof}

\begin{proof}[Proof of Proposition~\ref{prop21}]
Let $m=n-1$. Consider the following diagram
$$
\T^{m+1}=\T^{m-2}\times\T^3\xrightarrow{pr}\T^3\xrightarrow{\rho}\mathbb S^3\xrightarrow{\alpha}Top/O,
$$
where $pr$ is the projection to the $\T^3$ factor, $\rho$ is a degree one map and $\alpha\in \pi_3(Top/O)\simeq\Z_2$ is the generator. Let $\varphi$ be the composite map
$$
\varphi=\alpha\circ\rho\circ pr.
$$
Also let $\sigma\colon \T^m\times\{0\}\to\T^{m+1}=\T^m\times S^1$ be the inclusion map. Clearly $\varphi\circ \sigma$ is null-homotopic. Hence, by Remark~\ref{rem24} (in which $m$ replaces $n$) the smoothing $\theta$ of $\T^{m+1}$ corresponding to $\varphi$ determines a diffeomorphism $g\colon\T^m\to\T^m$ such that the mapping torus of $g$ is $(\T^{m+1}, \theta)$. Now we define 
$$h=g\times id\colon \T^{n-1}\times S^1\to\T^{n-1}\times S^1,$$
 and proceed to verify properties 1-4 of the proposition. 

{\it Verification of property 1.} By the definition of $h$ the induced map $h_\#$ on the fundamental group is identity. Hence, by the work of Hsiang and Wall~\cite{HW}, $h$ is topologically pseudo-isotopic to $id_{\T^n}$.

{\it Verification of property 2.} Assume to the contrary that $h$ is PL pseudo-isotopic to $id_{\T^n}$. Then, by Proposition~\ref{prop22}, the mapping torus $M_h=M_g\times S^1$ is PL homeomorphic to $\T^{n+1}=\T^n\times S^1$. As before, we can arrange that the PL homeomorphism between $M_g\times S^1$ and $\T^n\times S^1$ induces the canonical identification of the fundamental groups. We pass to the infinite cyclic covering to obtain a PL homeomorphism $F\colon M_g\times \R\to\T^n\times \R$. By cutting $\T^n\times \R$ along $F(M_g\times\{0\})$ and $\T^n\times\{t\}$ for a sufficiently large $t$ we obtain a PL $h$-cobordism between $M_g$ and $\T^n$. Since $Wh(\pi_1(\T^n))$ vanishes, we conclude that $M_g$ is PL homeomorphic to $\T^n$. 

Recall that by the work of Hsiang and Wall~\cite{HW} homotopic homeomorphisms of $\T^n$ are topologically pseudo-isotopic. Using this fact together with PL ``smoothing" theory of Kirby and Siebenmann~\cite{KS}, we conclude that
$$
\gamma \circ\varphi\colon\T^n\to Top/PL=K(\Z_2, 3)
$$
is null-homotopic. Here $\gamma$ is the canonical map $Top/O\to Top/PL$.

Now let $a$ be the generator of $H^3(Top/PL, \Z_2)$. Then the cohomology class $(\gamma \circ \varphi)^*(a)\in H^3(\T^n,\Z_2)$ vanishes. Since $(\rho\circ pr)^*\colon H^3(\mathbb S^3, \Z_2)\to H^3(\T^n,\Z_2)$ is clearly monic, we obtain that 
\begin{equation}\label{eq_vanish}
(\gamma\circ \alpha)^*(a)=0
\end{equation}
But $\gamma_\#\colon \pi_3(Top/O)\to\pi_3(Top/PL)$ is an isomorphism. Hence $\gamma\circ\alpha$ is the generator of $\pi_3(Top/PL)=\pi_3(K(\Z_2,3))$ because of our choice of $\alpha$. Therefore $(\gamma\circ\alpha)^*\colon H^3(Top/PL, \Z_2)$ $\to H^3(\mathbb S^3, \Z_2)$ is also monic. Thus~(\ref{eq_vanish}) implies that $a=0$ yielding a contradiction.

{\it Verification of property 3.} Recall that $\theta$ is the smooth structure on $\T^n$ which corresponds to homotopy class of $\varphi$. The $3^n$-sheeted covering map $\pi\colon\T^n\to\T^n$ induces a smooth structure $\omega$ on $\T^n$; namely, the smooth structure such that $\pi\colon (\T^n, \omega)\to (\T^n,\theta)$ is a smooth codimension zero immersion. This smooth structure corresponds to the homotopy class of the continuous map $\varphi\circ \pi\colon \T^n\to Top/O$. This map is clearly the same map as the composite map 
$$
\T^{n-3}\times\T^3\xrightarrow{pr}\T^3\xrightarrow{\pi'}\T^3\xrightarrow{\rho}\mathbb S^3\xrightarrow{\alpha}Top/O,
$$
where $\pi'\colon \T^3\to\T^3$ is given by $x\mapsto(3x \;\;\mbox{mod}\;\; \Z^3)$.
Since $\rho\circ\pi' \colon \T^3\to\mathbb S^3$ has odd degree (namely 27), the map $\alpha\circ \rho\circ\pi'$ is homotopic to $\alpha\circ\rho$. This is because the canonical map $\gamma\colon Top/O\to Top/PL=K(\Z_2, 3)$ is $7$-connected and $\T^3$ is $3$-dimensional. Therefore $\varphi\circ\pi$ is homotopic to $\varphi$ and $(\T^n, \omega)$ is smoothly concordant to $(\T^n, \theta)$.

Now let $\tilde{g}\colon\T^{n-1}\to\T^{n-1}$ be a lifting of $g$ through the covering map $x\mapsto(3x\;\; \mbox{mod}\;\; \Z^{n-1})$. Then, by the Addendum to Proposition~\ref{prop22}, $\tilde{g}$ is smoothly pseudo-isotopic to $g$. Let $F\colon \T^{n-1}\times [0,1]\to\T^{n-1}\times[0,1]$ be such a pseudo-isotopy; \ie let $F$ be a diffeomorphism such that
$$
F|_{\T^{n-1}\times\{0\}}=g \;\; \mbox{and}\;\; F|_{\T^{n-1}\times\{1\}}=\tilde{g}.
$$
Since $S^1$ has zero Euler characteristic, the product formula for concordances~\cite[Proposition on p. 18]{H} applied to $F$, yields a diffeotopy between $h=g\times id_{S^1}$ and $\tilde{h}=\tilde{g}\times id_{S^1}$.

{\it Verification of property 4.} Let $\tau\colon S^1\to S^1$ be the double covering map given by $x\mapsto (2x \;\;\mbox{mod}\;\; \Z)$ and let $\Omega$ be the lifting of $\theta$, \ie the smooth structure on $\T^n$ such that $id\times\tau\colon(\T^{n-1}\times S^1, \Omega)\to(\T^{n-1}\times S^1, \theta)$ is a smooth codimension zero immersion. This smooth structure corresponds to the homotopy class of the continuous map $\varphi\circ (id\times\tau)\colon\T^n\to Top/O$. This map is clearly the same as the composite map
$$
\T^{n-3}\times\T^3\xrightarrow{pr}\T^3=\T^2\times S^1\xrightarrow{id\times\tau}\T^2\times S^1\xrightarrow{\rho}\mathbb S^3\xrightarrow{\alpha}Top/O.
$$
Since $\rho\circ(id\times\tau)\colon\T^3\to\mathbb S^3$ has even degree (namely 2), the map $\alpha\circ\rho\circ(id\times\tau)$ is null-homotopic. Hence $\varphi\circ(id\times\tau)$ is also null-homotopic, and $(\T^n, \Omega)$ is smoothly concordant to $\T^n$ equipped with its natural smooth structure. 

A standard argument shows that the mapping torus of $g^2$ is smoothly concordant to $(\T^n, \Omega)$. We conclude, by the Addendum to Proposition~\ref{prop22}, that $g^2$ is smoothly pseudo-isotopic to $id_{\T^{n-1}}$. Therefore, by arguing as we did in verifying property 3, we see that $h^2=(g\times id_{S^1})^2= g^2\times id_{S^1}$ is diffeotopic to $id_{\T^{n-1}} \times id_{S^1} = id_{\T^n}$.
\end{proof}

\section{Construction of the expanding endomorphism}

Consider the diffeomorphism $h\colon \T^n\to\T^n$ given by Proposition~\ref{prop21}. Let $M_h$ be the mapping torus of $h$. Manifold $M_h$ is a fake torus by Proposition~\ref{prop22}. For each $k\geq 1$ and $m\geq 1$, we will define self-covering maps $p_k, q_m\colon M_h\to M_h$. Our strategy is to obtain an expanding endomorphism of $M_h$ by composing $p_k$ and $q_m$ for sufficiently large $k$. Roughly speaking, $p_k$ will be expanding with respect to $x\in \T^n$, that is, along the fibers and $q_m$ will be expanding with respect to $t$, that is, transversely to the fibers.

\begin{subsection}{Construction of covering map $p_k$.}
Recall that $\pi\colon \T^n\to\T^n$ is given by $x\mapsto (3x \;\;\mbox{mod}\;\; \Z^n)$. Let $h_0=h$ and let $h_1\colon\T^n\to\T^n$ be some lift of $h_0$ through $\pi$. By property $3$ of Proposition~\ref{prop21}, $h_0$ is diffeotopic to $h_1$. Hence there exists a diffeotopy $\varphi_1\colon [0,1]\times\T^n\to\T^n$ such that $\varphi_1(0, \cdot)=id_{\T^n}$ and $\varphi_1(1, \cdot)=h_1^{-1}\circ h_0$.

Next we define a sequence of liftings $\{h_i; i\geq 0\}$ inductively. Assume that for some $i\geq 1$ we have defined a diffeomorphism $h_{i-1}\colon \T^n\to\T^n$ and its lifting $h_i$. Also assume that we have a diffeotopy $\varphi_i\colon[0,1]\times\T^n\to\T^n$ that connects $id_{\T^n}$ to $h_i^{-1}\circ h_{i-1}$. Then, by the Lifting Lemma, there exists a unique diffeotopy $\varphi_{i+1}$ such that $\varphi_{i+1}(0, \cdot)=id_{\T^n}$ and the diagram
$$
\begin{CD}
\T^n @>{\varphi_{i+1}(t,\cdot)}>> \T^n \\
@V{\pi}VV @V{\pi}VV \\
\T^n @>{\varphi_i(t,\cdot)}>> \T^n 
\end{CD}
$$ 
commutes for every $t\in[0,1]$. Define
$$
h_{i+1}=h_i\circ\varphi_{i+1}(1,\cdot)^{-1}.
$$
It is easy to see that $h_{i+1}$ is a lifting of $h_i$. Also note that each $h_i$ is a (particular) lifting of $h$ through the covering map $\pi^i$.

Next we describe two auxiliary manifolds $N_k$ and $M_{h_k}$. Manifold $M_{h_k}$ is simply the mapping torus of $h_k$, \ie
$$
M_{h_k} =[0, 1] \times \T^n/(1,x)\sim(0, h_k(x)).
$$
Manifold $N_k$ is a {\it multiple mapping torus} of the sequence of diffeomorphisms $h=h_0$, $h^{-1}_{k-1}\circ h_k$,  $h^{-1}_{k-2}\circ h_{k-1}, \ldots$ $h_0^{-1}h_1$ defined in the following way
$$
N_k=\bigsqcup^{k}_{i=1}\left[\frac{i}{k+1},\frac{i+1}{k+1}\right]\times\T^n\left/
\begin{array}{ll}
(1, x)\sim(0,h(x)),& \\[6pt]
 \left(r\left[\frac{i}{k+1},\frac{i+1}{k+1}\right], x\right)\sim\left(l\left[\frac{i+1}{k+1},\frac{i+2}{k+1}\right],h^{-1}_{k-i-1}(h_{k-i}(x))\right), \\[6pt]
  1\leq i\leq k-1
\end{array}\right.
$$
Here if $[a,b]$ is an interval then $l[a,b]=a$ and $r[a,b]=b$.

Now we will construct three auxiliary maps $H_k$, $F_k$ and $P_k$ whose domains and ranges are indicated in the diagram below
$$
\xymatrix{
N_k \ar[r]^{F_k} & M_{h_k} \ar[ld]^{P_k}\\
M_h\ar[u]^{H_k} &}
$$
We define these maps by the following formulae

\begin{multline*}
H_k(t,x)=
\begin{cases}
(t, \varphi_{k-i}\left((k+1)(t-\frac{i}{k+1}),x)\right), &\mbox{if} \;\;\frac{i}{k+1}\leq t\leq\frac{i+1}{k+1}, i=0, \ldots k-1
\\
(t, x), &\mbox{if} \;\;\frac{k}{k+1}\leq t\leq 1
\end{cases}\\[6pt]
\shoveleft{F_k(t, x)=
\begin{cases}
(t,x), &\mbox{if} \;\;0\leq t\leq\frac{1}{k+1}
\\
(t, h_k^{-1}(h_{k-i}(x))), &\mbox{if} \;\;\frac{i}{k+1}\leq t\leq\frac{i+1}{k+1}, i=1,\ldots k
\end{cases}
}
\\[6pt]
\shoveleft{P_k(t,x)=(t, 3^kx\;\;\mbox{mod}\;\;\Z^n).\hfill}
\end{multline*}
Since $h_k$ is a lifting of $h$ with respect to $\pi^k\colon\T^n\to\T^n$, the last formula indeed gives a well defined $3^{kn}$-sheeted covering map $P_k\colon M_{h_k}\to M_h$. The fact that the first two formulae give well defined diffeomorphisms can be checked directly using Figures~\ref{fig2} and~\ref{fig3}.

\begin{figure}[htbp]
\begin{center}
\includegraphics{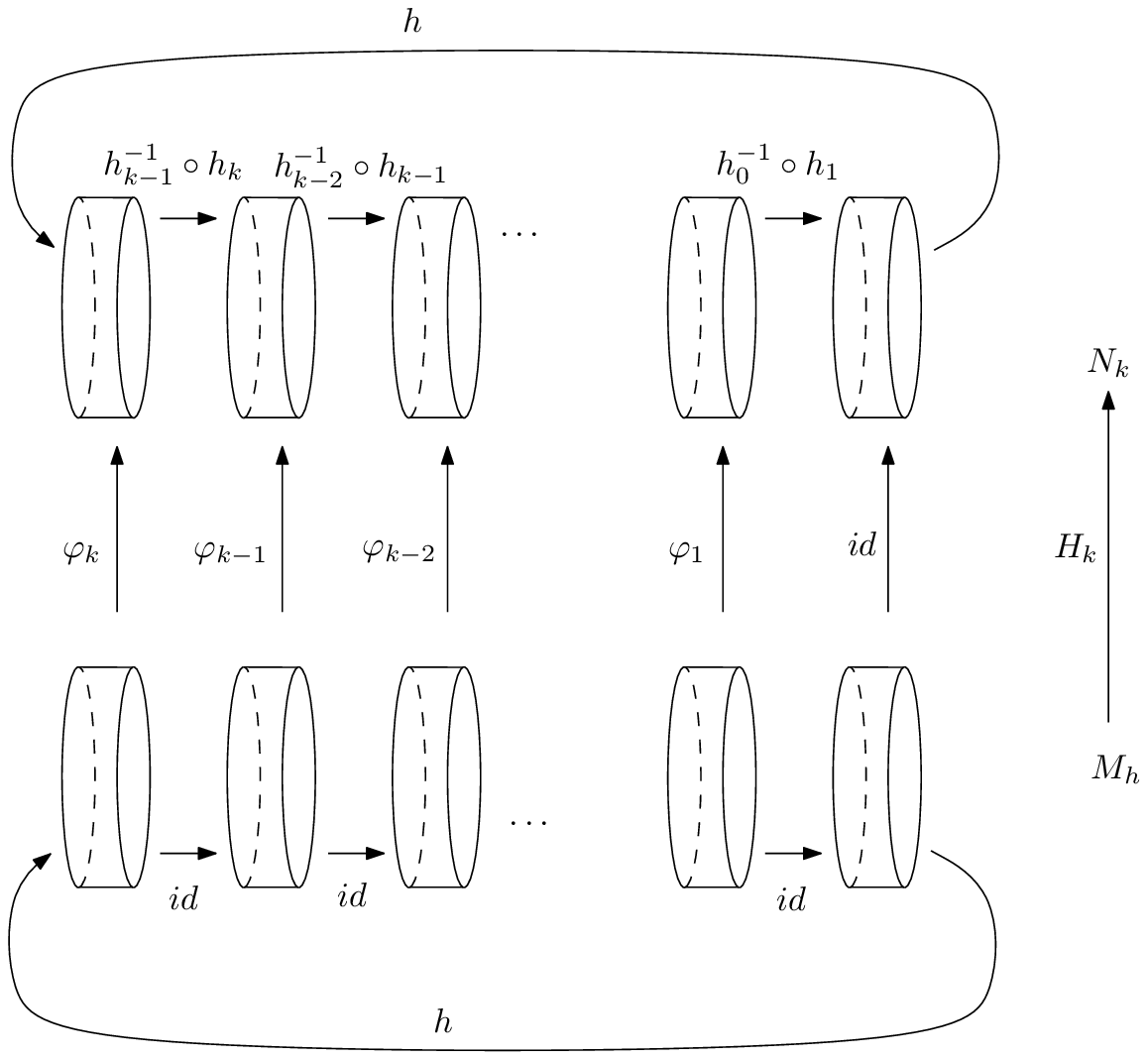}
\end{center}
 \caption{}
\label{fig2}
\end{figure}

\begin{figure}[htbp]
\begin{center}
\includegraphics{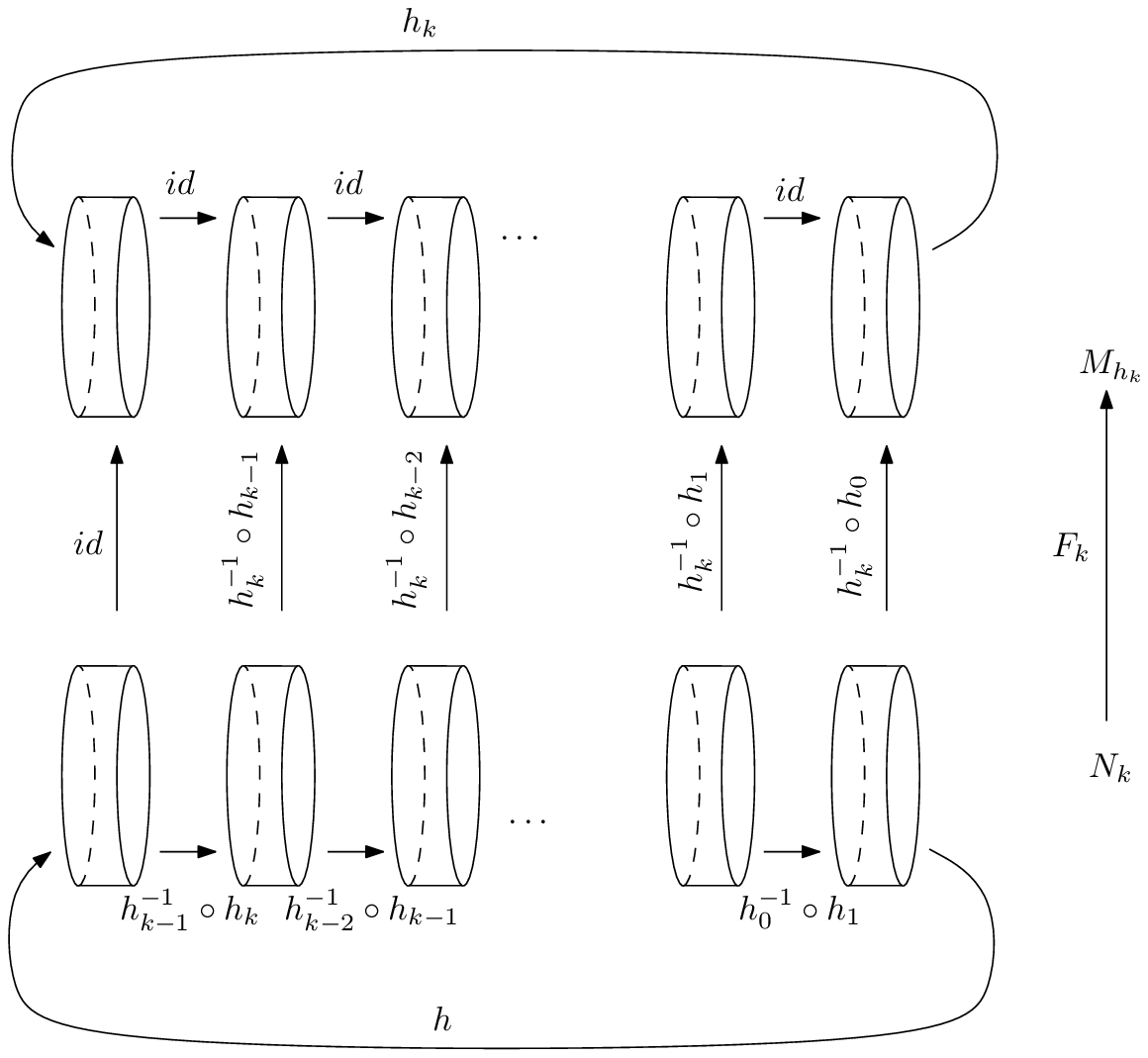}
\end{center}
 \caption{}
\label{fig3}
\end{figure}

Finally define
$$
p_k=P_k\circ F_k\circ H_k.
$$
It is clear from our definitions that $p_k$ has the form $p_k(t, x)=(t, \rho_{k, t}(x))$, where each $\rho_{k, t}$ is a $3^{kn}$-sheeted self-covering map.
\end{subsection}

\begin{subsection}{The expanding property of $p_k$.}
We equip $M_h$ with a Riemannian metric in the following way. Let $g_0$ be the standard flat metric on $\T^n$ and let $g_1=h^*g_0$. For every $t\in[0,1]$ we let $g_t=(1-t)g_0+tg_1$. For each point $(t,x)\in M_h$ we equip the tangent space $T_{t, x}M_h$ with the scalar product $g_t(x)+dt^2$.
Clearly this defines a Riemannian metric on $M_h$ which we denote by $G$. We write $\|v\|_G$ for the norm of a vector $v\in TM_h$. If a vector $v\in TM_h$ is tangent to a torus fiber of $M_h$ then we say that $v$ is a {\it vertical vector} and we write $v\in T^{\|}M_h$.

\begin{lemma}\label{lemma4}
For any $\lambda>1$ there exists $k\geq1$ such that
$$
\|D{p_k}(v)\|_G\geq \lambda\|v\|_G,  \;\;\mbox{for any}\quad v\in T^{\|}M_h.
$$
\end{lemma}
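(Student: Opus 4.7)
The plan is to exploit the factorization $p_k=P_k\circ F_k\circ H_k$ together with the explicit form $p_k(t,x)=(t,\rho_{k,t}(x))$. Since each of $H_k$, $F_k$ and $P_k$ preserves the $t$-coordinate, $Dp_k$ sends vertical vectors to vertical vectors, and on the vertical subspace one has the factorization
$$
Dp_k|_{T^{\|}M_h}\;=\;DP_k\,\circ\,DF_k\,\circ\,DH_k.
$$
Here $DP_k$ restricted to the vertical subspace is multiplication by $3^k$ in the flat metric. The vertical derivative of $F_k$ is essentially the derivative of $h_k^{-1}\circ h_{k-i}$, and the vertical derivative of $H_k$ is the $x$-derivative of the diffeotopy $\varphi_{k-i}$; the time-rescaling in $H_k$ contributes a factor of $k+1$ to the horizontal component of $DH_k$, but this plays no role for vertical vectors.

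The crucial step is to bound the vertical derivatives of $F_k$ and $H_k$ uniformly in $k$. For this I would use that each $h_j$ is a lift of $h$ through $\pi^j$: the inductive relation $\pi\circ h_{i+1}=h_i\circ\pi$ yields $\pi^j\circ h_j=h\circ\pi^j$, and differentiating this identity, since $D\pi=3\cdot\Id$, one obtains $Dh_j(y)=Dh(\pi^j(y))$ for every $y\in\T^n$. Hence $\|Dh_j\|_\infty\leq\|Dh\|_\infty$ and $\|Dh_j^{-1}\|_\infty\leq\|Dh^{-1}\|_\infty$ uniformly in $j$. The same commutative diagram argument applied to the lifted diffeotopies $\varphi_j$ gives analogous uniform bounds. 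Consequently there is a constant $C_1=C_1(h)$, independent of $k$, such that $C_1^{-1}\|v\|_{g_0}\leq\|DF_k\circ DH_k(v)\|_{g_0}\leq C_1\|v\|_{g_0}$ for every nonzero vertical $v$.

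Finally, the metric $G$ restricts on the vertical subspace to $g_t=(1-t)g_0+t\,h^*g_0$, and since $h$ is a fixed diffeomorphism of a compact manifold, the family $\{g_t\}$ is uniformly equivalent to $g_0$ by constants depending only on $h$. Combining the three observations yields a uniform inequality
$$
\|Dp_k(v)\|_G\;\geq\;C^{-1}\,3^k\,\|v\|_G
$$
for every $v\in T^{\|}M_h$, where $C$ depends only on $h$. Choosing $k$ so that $3^k\geq C\lambda$ then gives the lemma.

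The main obstacle is the uniform boundedness of the vertical derivatives of the lifts $h_j$ and the lifted diffeotopies $\varphi_j$; without the identity $Dh_j(\cdot)=Dh(\pi^j(\cdot))$ (and its analogue for $\varphi_j$) these could a priori grow with $k$ and compete with the expansion factor $3^k$ produced by $P_k$. All of the remaining ingredients are routine comparisons of equivalent metrics on a compact manifold.
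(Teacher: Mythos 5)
Your proof is correct and follows essentially the same route as the paper: factor $p_k=P_k\circ F_k\circ H_k$, observe that $DP_k$ contributes the factor $3^k$ on vertical vectors, and show that the vertical derivatives of $F_k\circ H_k$ are bounded uniformly in $k$ because $h_j$, $h_j^{-1}$ and $\varphi_j(t,\cdot)$ are lifts through the homothetic covers $\pi^s$, then compare $G$ with the flat fiber metric $g_0$ using compactness. Your explicit identity $Dh_j(y)=Dh(\pi^j(y))$ (and its analogue for $\varphi_j$) is a clean way to make precise the uniform-bound claim that the paper asserts more tersely.
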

\begin{proof}
For each $k\geq1$ define the Riemannian metric on the fibers of $M_{h_k}$ by letting
$$
g_{k,t}=(F_k\circ H_k|_{\{t\}\times \T^n})_* g_t.
$$
Then $G_k(t,x)=g_{k,t}(x)+dt^2$ is a Riemannian metric on $M_{h_k}$. We will write $\|v\|_{G_k}$ for the $G_k$-norm of a vector $v\in TM_{h_k}$. We can also equip each fiber of $M_h$ and $M_{h_k}$ with a flat metric $g_0$. Indeed, recall that both mapping tori $M_h$ and $M_{h_k}$ can be identified with $[0,1)\times \T^n$. Hence each fiber is identified with the flat torus $(\T^n, g_0)$. We will write $\|v\|_0$ for the $g_0$-norm of a vertical vector $v\in T^\|M_h$. Same notation $\|v\|_0$ will be used for $v\in T^\|M_{h_k}$.

The proof of the lemma is based on the following facts.
\begin{enumerate}
\item There exists a constant $c>0$ such that $c\leq \frac{\|v\|_G}{\|v\|_0}\leq\frac{1}{c}$ for all $v\in T^\|M_{h}$;
\item $\|DP_k(v)\|_0=3^k\|v\|_0$ for all $v\in T^\|M_{h_k}$;
\item There exists a constant $C>0$ such that $\|D(F_k\circ H_k|_{\{t\}\times\T^n})(v)\|_0\geq C\|v\|_0$ for all $k\geq 1$, $t\in[0,1)$ and $v\in T^\|M_h$.
\end{enumerate}

The first fact is due to compactness of $M_h$. The second one follows from the definition of $P_k$. To see the third fact recall that the diffeomorphisms $F_k\circ H_k|_{\{t\}\times \T^n}$ are the compositions of at most three diffeomorphisms of $\T^n$ of the form $h_i$, $h_i^{-1}$ and $\varphi_j(t,\cdot)$, where $i,j\in\{1,2,\ldots k\}$ and $t\in [0,1]$.
The latter diffeomorphisms are liftings of $h$, $h^{-1}$ and $\varphi_1(t,0)$, $t\in[0,1]$, respectively, through the various homothetic self-covering maps $\pi^s$, $s\ge 1$. Hence the conorms of differentials of these diffeomorphisms are bounded uniformly in $i, j$ and $t$ from below.

For a non-zero $v\in T^\|M_h$, we have
\begin{multline*}
\displaystyle\frac{\|D(P_k\circ F_k\circ H_k)(v)\|_G}{\|v\|_G}=
 \displaystyle\frac{\|D(P_k\circ F_k\circ H_k)(v)\|_G}{\|D(P_k\circ F_k\circ H_k)(v)\|_0}\cdot
 \displaystyle\frac{\|D(P_k\circ F_k\circ H_k)(v)\|_0}{\|v\|_0}\cdot
 \displaystyle\frac{\|v\|_0}{\|v\|_G}\\[10pt]
 \geq c^2\displaystyle\frac{\|D(P_k\circ F_k\circ H_k)(v)\|_0}{\|v\|_0}=c^23^k\displaystyle\frac{\|D(F_k\circ H_k)(v)\|_0}{\|v\|_0}\geq c^23^kC.
\end{multline*}
Recall that $c$ and $C$ do not depend on $k$. Choose $k$ so that $c^23^kC>\lambda$. The lemma follows.
\end{proof}
\end{subsection}

\begin{subsection}{Construction of covering map $q_m$}
Pick an integer $m\geq 1$. We will define auxiliary manifolds $\bar{M}_{h, m}$, ${M}'_{h, m}$ and $\widetilde{M}_{h,m}$ together with maps between them as indicated on the diagram below
$$
\xymatrix{
\bar{M}_{h, m} \ar[r]^{S_m} & M'_{h, m} \ar[r]^{T_m} & \widetilde{M}_{h, m}\ar[ld]^{Q_m}\\
& M_h\ar[lu]^{R_m} }
$$
Manifold $\bar{M}_{h,m}$ is a ``long mapping torus" 
$$\bar{M}_{h,m}=[0, 2m+1]\times\T^n/(2m+1,x)\sim(0,h(x)).$$ Manifolds $M'_{h,m}$ and $\widetilde{M}_{h,m}$ are multiple mapping tori defined as follows
$$
M'_{h,m}=\bigsqcup^{2m+1}_{i=1}[i-1,i]\times\T^n\left/
\begin{array}{ll}
(2m+1, x)\sim(0,h(x)),& \\[6pt]
 (r[i-1, i], x)\sim(l[i,i+1],x), & \hbox{if $i$ is odd}, 1\leq i\leq 2m-1,\\[6pt]
  (r[i-1, i], x)\sim(l[i,i+1],h^2(x)), & \hbox{if $i$ is even}, 2\leq i\leq 2m
\end{array}\right.
$$
$$
\widetilde{M}_{h,m}=\bigsqcup^{2m+1}_{i=1}[i-1,i]\times\T^n\left/
\begin{array}{ll}
(2m+1, x)\sim(0,h(x)), & \\[6pt]
(r[i-1, i], x)\sim(l[i,i+1],h(x)), & 1\leq i\leq 2m
\end{array}
\right.
$$

Now we define the maps. We set
$$
R_m(t,x)=((2m+1)t, x).
$$
By property $4$ in Proposition~\ref{prop21} there exists a diffeotopy $\psi\colon[0,1]\times\T^n\to\T^n$ such that $\psi(0, \cdot)=id_{\T^n}$ and $\psi(1, \cdot)=h^{-2}$. Define
$$
S_m(t,x)=
\begin{cases}
(t,x) &\;\mbox{if}\;\; i\leq t\leq i+1, i=0, 2, \ldots 2m
\\
(t, \psi(t-i, x)) &\;\mbox{if}\;\; i\leq t\leq i+1, i=1,3, \ldots 2m-1
\end{cases}
$$
and
$$
T_m(t, x)=
\begin{cases}
(t,x) &\;\mbox{if}\;\; i\leq t\leq i+1, i=0, 2, 4, \ldots 2m
\\
(t, h(x)) &\;\mbox{if}\;\; i\leq t\leq i+1, i=1,3, \ldots 2m-1
\end{cases}
$$
\begin{figure}[htbp]
\begin{center}
\includegraphics{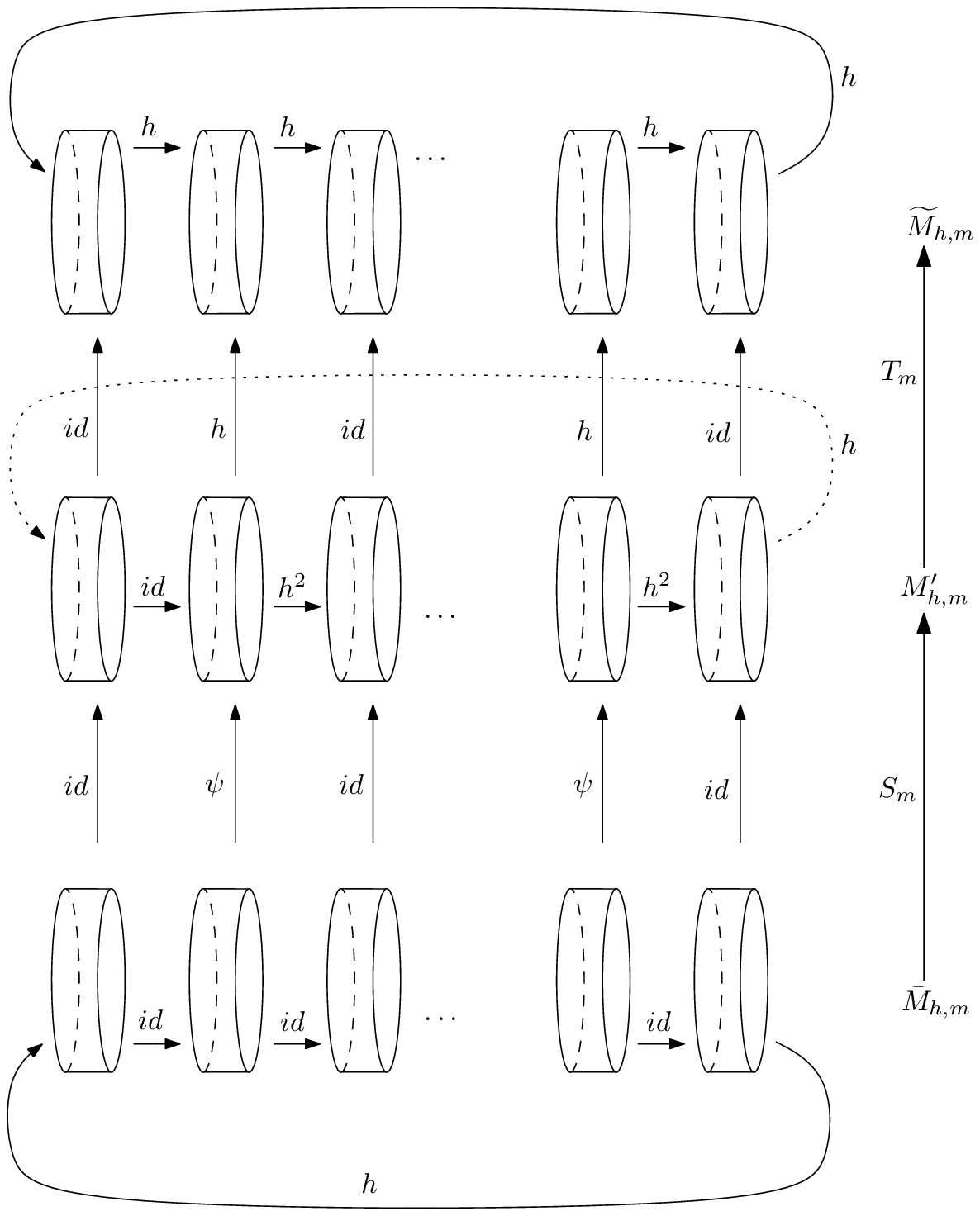}
\end{center}
 \caption{}
\label{fig4}
\end{figure}
With the help of Figure~\ref{fig4} one can check that $S_m$ and $T_m$ are well-defined diffeomorphisms. 

Finally define
$$
Q_m(t,x)=(t\;\;\mbox{mod}\;\; \Z, x)
$$
and 
$$
q_m=Q_m\circ T_m\circ S_m\circ R_m.
$$

It is clear that $q_m$ is $(2m+1)$-sheeted self-covering map of the form
$$
q_m(t,m)=((2m+1)t\;\;\mbox{mod}\;\; \Z, \xi_{m,t}(x)).
$$
\end{subsection}

\begin{subsection}{The expanding map $f$.}
Fix any $m\geq 1$ and consider the $q_m\colon M_h\to M_h$ constructed above. Let
$$
c=\inf_{\substack{v\in T^\|M_h\\ v\neq 0}}\displaystyle\frac{\|D{q_m}(v)\|_G}{\|v\|_G}
$$
Clearly $c>0$.

Now apply Lemma~\ref{lemma4} with $\lambda=\frac{2}{c}$ to obtain an integer $k\geq 1$. Define
$$
f=q_m\circ p_k.
$$
Then $f\colon M_h\to M_h$ is a self-covering map of the form
$$
f(t, x)=((2m+1)t\;\;\mbox{mod}\;\; \Z, \alpha_t(x)).
$$
Note that the subbundle $T^\|M_h$ is $Df$-invariant. 
\begin{lemma}\label{lemma5}
For any $v\in T^\|M_h$
$$
\|Df(v)\|_G\geq 2\|v\|_G.
$$
\end{lemma}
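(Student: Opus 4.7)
The plan is to chain the two available vertical expansion bounds. First I would observe that both $p_k$ and $q_m$ preserve the vertical subbundle $T^\|M_h$: by construction $p_k$ has the form $(t,x)\mapsto(t,\rho_{k,t}(x))$ and $q_m$ has the form $(t,x)\mapsto((2m+1)t\;\mathrm{mod}\;\Z,\xi_{m,t}(x))$, so each differential sends fiber-tangent vectors to fiber-tangent vectors. In particular, for $v\in T^\|M_h$, the vector $Dp_k(v)$ is again vertical, so it lies in the set over which the infimum defining $c$ is taken.

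Next I would simply combine the two inequalities. From the definition of $c$ and the verticality of $Dp_k(v)$,
$$\|Dq_m(Dp_k(v))\|_G\;\ge\;c\,\|Dp_k(v)\|_G.$$
Applying Lemma~\ref{lemma4} with $\lambda=2/c$ (which is how $k$ was chosen) gives $\|Dp_k(v)\|_G\ge (2/c)\|v\|_G$ for every vertical $v$. Chaining these bounds and using $f=q_m\circ p_k$ yields
$$\|Df(v)\|_G=\|Dq_m(Dp_k(v))\|_G\;\ge\;c\cdot\frac{2}{c}\,\|v\|_G=2\|v\|_G,$$
which is the claimed inequality.

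There is essentially no obstacle: the substantive content was already extracted into Lemma~\ref{lemma4} (the fiberwise expansion of $p_k$) and into the positivity of $c$ (which relies only on compactness of $M_h$ and the fact that $Dq_m$ is an isomorphism on the vertical bundle). The only minor point to verify carefully is the $Df$-invariance of $T^\|M_h$, which is already asserted in the paragraph preceding the lemma and follows immediately from the coordinate form of $p_k$ and $q_m$ noted above.
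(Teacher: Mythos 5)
Your proof is correct and matches the paper's argument exactly: chain the bound $\|Dq_m(w)\|_G\ge c\|w\|_G$ for vertical $w$ with Lemma~\ref{lemma4} applied at $\lambda=2/c$, using that $Dp_k$ preserves the vertical bundle. The extra remarks on verticality are sound and merely make explicit what the paper leaves implicit.
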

\begin{proof}
$$
\|Df(v)\|_G=\|D{q_m}(D{p_k}(v))\|_G\geq c\|D{p_k}(v)\|_G\geq c\lambda\|v\|_G=2\|v\|_G,
$$
where the last inequality is provided by Lemma~\ref{lemma4}.
\end{proof}

Let $T^\perp M_h$ be the $1$-dimensional orthogonal complement of $T^\|M_h$. Then for any vector $v\in TM_h$ there is a unique decomposition
$$
v=v^\|+v^\perp,
$$
where $v^\|\in T^\|M_h$ and $v^\perp \in T^\perp M_h$. For a vector $v^\perp\in T^\perp M_h$ we have
$$
Df(v^\perp)^\perp=(2m+1)v^\perp
$$
Also, since $M_h$ is compact,  there exists a constant $K>0$ such that 
\begin{equation}\label{eq_estimate}
\|(Dfv^\perp)^\|\|_G\leq K\|v^\perp\|_G.
\end{equation}

Now we are ready to prove that $f$ is expanding via a standard cone argument. We equip $TM_h$ with a Finsler metric $|||\cdot |||$ defined in the following way
$$
|||v|||=\max\left(\frac{1}{K}\|v^\|\|_G, \|v^\perp\|_G\right).
$$
We now show that $f$ is expanding with respect to $|||\cdot |||$.

{\it Case 1:} $\|v^\perp\|_G\geq\frac{1}{K}\|v^\|\|_G$. In this case we have 
$$
|||Df(v)|||\geq\|Df(v)^\perp\|_G=(2m+1)\|v^\perp\|_G=(2m+1)|||v |||.
$$

{\it Case 2:} $\|v^\perp\|_G<\frac{1}{K}\|v^\|\|_G$.
We use~(\ref{eq_estimate}) and Lemma~\ref{lemma5} for the estimate below
\begin{multline*}
|||Df(v)|||\geq\frac{1}{K}\|Df(v)^\|\|_G=\frac{1}{K}\|Df(v^\|)^\|+Df(v^\perp)^\|\|_G\\
\geq 
\frac{1}{K}\|Df(v^\|)^\|\|_G-\frac{1}{K}\|Df(v^\perp)^\|\|_G\geq \frac{2}{K}\|v^\|\|_G-\frac{1}{K}\cdot K\|v^\perp\|_G\\
>
\frac{2}{K}\|v^\|\|_G-\frac{1}{K}\|v^\|\|_G=\frac{1}{K}\|v^\|\|_G=|||v|||.
\end{multline*}

Because $M_h$ is compact we actually have
$$
|||Df(v)|||>\mu|||v||| 
$$
for some $\mu>1$ and all non-zero $v\in TM_h$. Now let $\|\cdot\|$ be a Riemannian metric on $M_h$. Then, because $\|\cdot\|$ and $|||\cdot|||$ are equivalent ($c|||\cdot|||\le\|\cdot\|\le C|||\cdot|||$), the last inequality implies that for a sufficiently large $N$
$$
\|Df^N(v)\|>\|v\|
$$
for all non-zero $v\in TM_h$. Then, using the standard adapted metric construction~\cite{Math}, we can find a Riemannian metric $\|\cdot\|_{ad}$ such that
$$
\|Df(v)\|_{ad}>\|v\|_{ad}
$$
for all non-zero $v$. Hence $f$ is expanding. $\hfill\Box$
\end{subsection}

$~$\\
$~$\\

F.T. Farrell, A. Gogolev

SUNY Binghamton, N.Y., 13902, U.S.A.\\


\begin{thebibliography}{texttLL}

\bibitem[F71]{F} F.T. Farrell, {\it
The obstruction to fibering a manifold over a circle. }
Indiana Univ. Math. J. 21 (1971), 315--346. 

\bibitem[FJ78]{FJ} F.T. Farrell, L.E. Jones, 
{\it Examples of expanding endomorphisms on exotic tori. }
Invent. Math. 45 (1978), no. 2, 175--179.

\bibitem[Fr70]{Fr} J. Franks, {\it
Anosov diffeomorphisms.} 1970 Global Analysis (Proc. Sympos. Pure Math., Vol. XIV, Berkeley, Calif., 1968) pp. 61--93 Amer. Math. Soc., Providence, R.I. 

\bibitem[H78]{H} A.E. Hatcher, {\it Concordance spaces, higher simple-homotopy theory, and applications.} Algebraic and geometric topology (Proc. Sympos. Pure Math., Stanford Univ., Stanford, Calif., 1976), Part 1, pp. 3--21, vol. 32, Amer. Math. Soc., Providence, R.I., 1978.

\bibitem[HW69]{HW} W.-C. Hsiang, C.T.C. Wall, {\it On homotopy tori II.} Bull. London Math. Soc. 1 (1969) 341--342.

\bibitem[Gr81]{Gr} M. Gromov, {\it Groups of polynomial growth and expanding maps.} Inst. Hautes \'Etudes Sci. Publ. Math. No. 53 (1981), 53--73.

\bibitem[KS77]{KS} R. Kirby, L. C. Siebenmann, {\it
Foundational essays on topological manifolds, smoothings, and
triangulations.} Annals of Mathematics Studies, No. 88. Princeton
University Press, Princeton, N.J.; University of Tokyo Press, Tokyo,
1977. vii+355 pp.

\bibitem[LR82]{LR} K. B. Lee, F. Raymond, {\it Rigidity of almost crystallographic groups.} Combinatorial methods in topology and algebraic geometry (Rochester, N.Y., 1982), 73--78, Contemp.
Math., 44, Amer. Math. Soc., Providence, RI, 1985.

\bibitem[Math68]{Math}
J. Mather. {\it Characterization of Anosov diffeomorphisms.}
Nederl. Akad. Wetensch. Proc. Ser. A 71 = Indag. Math.  30  (1968)
479--483.

\bibitem[Ru01]{Ru} Yu. Rudyak, {\it Piecewise linear structures on topological manifolds.} arXiv:math/0105047.

\bibitem[Sh69]{Sh} M. Shub, 
{\it Endomorphisms of compact differentiable manifolds. }
Amer. J. Math. 91 (1969) 175--199. 

\end{thebibliography}
\end{document}